\numberwithin{equation}{section}
\newtheorem{theorem}{Theorem}[section]
\newtheorem{lemma}[theorem]{Lemma}
\newtheorem{proposition}[theorem]{Proposition}
\newtheorem{remark}[theorem]{Remark}
\newcommand{\ind}{\mathbf{1}}
\newcommand{\cT}{{\ensuremath{\mathcal T}} }
\newcommand{\bP}{{\ensuremath{\mathbf P}} }
\newcommand{\bx}{{\ensuremath{\mathbf x}} }
\newcommand{\by}{{\ensuremath{\mathbf y}} }
\DeclareMathSymbol{\leqslant}{\mathalpha}{AMSa}{"36} 
\DeclareMathSymbol{\geqslant}{\mathalpha}{AMSa}{"3E} 
\DeclareMathSymbol{\eset}{\mathalpha}{AMSb}{"3F}     
\newcommand{\dd}{\,\text{\rm d}}             
\newcommand{\bbP}{{\ensuremath{\mathbb P}} }
\newcommand{\bbQ}{{\ensuremath{\mathbb Q}} }
\newcommand{\tm}{{\ensuremath{t^n_{\mathrm {mix}}}}}
\newcommand{\Tm}{{\ensuremath{T^n_{\mathrm {mix}}}}}
\newcommand{\gap}{{\ensuremath{\mathrm gap}} }
\newcommand{\gep}{\varepsilon}       
\newcommand{\gO}{\Omega}
\def\captionfont@{\footnotesize}
\def\captionheadfont@{\scshape}
\long\def\@makecaption#1#2{%
  \vspace{2mm}
  \setbox\@tempboxa\vbox{\color@setgroup
    \advance\hsize-6pc\noindent
    \captionfont@\captionheadfont@#1\@xp\@ifnotempty\@xp
        {\@cdr#2\@nil}{.\captionfont@\upshape\enspace#2}%
    \unskip\kern-6pc\par
    \global\setbox\@ne\lastbox\color@endgroup}%
  \ifhbox\@ne 
    \setbox\@ne\hbox{\unhbox\@ne\unskip\unskip\unpenalty\unkern}%
  \fi
  \ifdim\wd\@tempboxa=\z@ 
    \setbox\@ne\hbox to\columnwidth{\hss\kern-6pc\box\@ne\hss}%
  \else 
    \setbox\@ne\vbox{\unvbox\@tempboxa\parskip\z@skip
        \noindent\unhbox\@ne\advance\hsize-6pc\par}%
\fi
  \ifnum\@tempcnta<64 
    \addvspace\abovecaptionskip
    \moveright 3pc\box\@ne
  \else 
    \moveright 3pc\box\@ne
    \nobreak
    \vskip\belowcaptionskip
  \fi
\relax
}
\def\writefig#1 #2 #3 {\rlap{\kern #1 truecm
\raise #2 truecm \hbox{#3}}}
\title{A product chain without cutoff}
\address{IMPA - Instituto Nacional de Matem\'atica Pura e Aplicada,
Estrada Dona Castorina 110,
Rio de Janeiro / Brasil 22460-320}
\email{lacoin@impa.br}
\author{Hubert Lacoin}
\begin{document}

\maketitle

\begin{abstract}
In this note, we construct an example of a sequence of $n$-fold product chains which does display cutoff neither for the  total-variation distance nor for the separation distance.
In addition we show that this type of product chains necessarily displays pre-cutoff.\\
{\em Keywords: Markov chains, Mixing time, Cutoff, Counter Example }

\end{abstract}

\section{Introduction}

Consider a sequence of reversible irreducible continuous Markov chains $X^n=(X^n(t))_{t\ge 0}$, each being defined on a finite state spaces $(\gO_n)_{n\ge 0}$.
Let $\pi_n$ denote the unique reversible probability measure associated to $X^n$. 
It is a classic result of Markov chain theory that for any initial condition the distribution 
of $X^n(t)$ converges to $\pi_n$ when $t$ goes to infinity.
We let $P^n_t$ denote the Markov semigroup associated to $X^n$ and $d_n(t)$ resp.\ 
$d_n^s(t)$ denote the distance to equilibrium for the total variation 
distance and separation distance (they are defined by taking the maximal distance over all initial condition)

\begin{equation}\label{septv}
\begin{split}
d_n(t)&:=\max_{x\in \gO_n} \| P^{n}_t(x,\cdot)-\pi_n \|_{TV},\\
d^s_n(t)&:= 1- \min_{x,y \in \gO_n} \frac{P^n_t(x,y)}{\pi_n(y)}.
\end{split}
\end{equation}

When we have to consider only one Markov chain $X$ in Section \ref{PTV}, we will use the same notation without $n$.

The sequence $X^n$ is said to display \textsl{cutoff} if $d_n(t)$ drops abruptly from $1$ to $0$ on the appropriate time scale.
More precisely, if one defines the mixing 
time corresponding to the distance $a\in(0,1)$ to be
\begin{equation}
\tm(a):=\inf\{ t \ | \ d_n(t)< a \}. 
\end{equation}
the chain is said to display \textsl{cutoff} if for any $\gep\in(0,1/2]$
\begin{equation}
\lim_{n\to \infty} \tm(\gep)/\tm(1-\gep)=1.
\end{equation}
We follow the definition given in \cite[pp .248]{cf:LPW} and say displays pre-cutoff if
\begin{equation}
\limsup_{\gep \to 0+}\limsup_{n\to \infty} \tm(\gep)/\tm(1-\gep)<\infty.
\end{equation}
Note that one can replace $\tm$ by $t_s^n$ the mixing time for the separation distance.

\medskip

The term cutoff was coined by Aldous and Diaconis \cite{cf:AD} and its occurrence  for the transposition shuffle was proved by 
Diaconis and Shahshahani \cite{cf:DS}. It is thought to hold for many natural sequences of Markov chain as soon as 
\begin{equation}\label{prodcond}
\tm(1/4) \times \gap_n=\infty \tag{H}
\end{equation} 
where $\gap_n$ corresponds to the spectral gap of the chain $X^n$  
(see e.g. \cite[Chapter 12 and Chapter 18]{cf:LPW} for the definition of the spectral gap and an account on the cutoff phenomenon).
More precisely the conditioncis necessary and it is was proposed by Peres as a natural sufficient 
condition provided the chain is ``nice enough". As \eqref{prodcond} is in fact known to be a necessary condition for pre-cutoff, 
this would imply in particular than pre-cutoff implies cutoff for ``nice chains''.

\medskip

Shortly after \eqref{prodcond} was proposed as a sufficient condition for cutoff, Aldous constructed a chain that satisfies 
\eqref{prodcond} and displays pre-cutoff, but for which cutoff does not hold. 
Pak also constructed a counter-example (with no pre-cutoff) 
which is a random walk on a Cayley graph (see \cite[pp 253--256]{cf:LPW}). 
Since then it has been a challenge to find a large class of Markov chain for which the \eqref{prodcond} condition is a sufficient one.
Note that Chen and Saloff-Coste have shown that \eqref{prodcond} is a sufficient condition in full generality 
when distance to equilibrium is measured by the $L^p$ norm \cite{cf:CSC}.
Let us note also that \cite[Proposition 7]{cf:LS} establishes that cutoff holds 
for large product chains provided one has a good-control on the supremum norm of the relative density of the marginals.

\medskip

We define $Y^n$ the chain corresponding to $n$ independent copies of $X^n$ (its $n$-th power)

\begin{equation}
Y^n(t):=(X^{n}_1(t),\dots,X^n_n(t)).
\end{equation}

In this note we show that the sequence $Y^n$ always displays pre-cutoff, and we construct construct a sequence of chain $X$ which is such that $Y$ displays no cutoff (whereas $X$ does), showing that condition \eqref{prodcond} is not a sufficient condition for cutoff for chains that are large powers of a simpler one.

\medskip

\section{Pre-cutoff for product chains}

We let $D_n$, $D_n^s$, $Q^n_t$, $\Tm$ and $T^n_s$, and $\mu_n:= \pi_n^{\otimes n}$  denote the distances to equilibrium, semigroup, and mixing time and equilibrium measure for the chain $Y^n$.
We have

\begin{proposition}\label{preco}
For any sequence of non-trivial Markov chain $X^n$ one has
\begin{equation}\label{crrr}
\limsup_{n\to \infty} \frac{\Tm(1-\gep)}{\Tm(\gep)}\le 2.
\end{equation}
The result also holds when the total-variation distance is replaced by the separation distance.
\end{proposition}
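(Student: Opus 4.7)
The plan is to exploit the submultiplicative property of the distance to equilibrium, applied to the product chain $Y^n$, together with the tensorization identity $1 - D^s_n(t) = (1 - d^s_n(t))^n$ coming from independence of the $n$ coordinates.

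First I would set up the relevant submultiplicativity. For the separation distance, the single-chain identity $\sigma(s+t) \ge \sigma(s)\sigma(t)$ with $\sigma := 1 - d^s_n$ (a consequence of Chapman-Kolmogorov combined with reversibility of $X^n$) tensorizes immediately via $1 - D^s_n = \sigma^n$, yielding
\[
1 - D^s_n(s+t) \ge \bigl(1 - D^s_n(s)\bigr)\bigl(1 - D^s_n(t)\bigr).
\]
For the total-variation distance, I would invoke the classical submultiplicativity $\bar D_n(s+t) \le \bar D_n(s)\,\bar D_n(t)$ for the auxiliary quantity $\bar D_n(t) := \max_{\mathbf x,\mathbf y}\|Q^n_t(\mathbf x,\cdot) - Q^n_t(\mathbf y,\cdot)\|_{TV}$, together with the sandwich $D_n \le \bar D_n \le 2 D_n$.

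Next I would choose $t_0 := \Tm(\gep)$ and apply each submultiplicative inequality with $s = t = t_0$, which gives
\[
D^s_n(2 t_0) \le 1 - (1-\gep)^2 = 2\gep - \gep^2, \qquad D_n(2 t_0) \le \bar D_n(t_0)^2 \le 4\gep^2.
\]
For $\gep$ sufficiently small, each of these upper bounds is itself $\le 1-\gep$ — an explicit calculation shows this holds as soon as $\gep < (3-\sqrt 5)/2$ in the separation case and $\gep < (\sqrt{17}-1)/8$ in the TV case. We then read off $\Tm(1-\gep) \le 2 t_0 = 2\Tm(\gep)$, and taking $\limsup_{n\to\infty}$ yields the proposition in both cases.

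The step I expect to be most subtle is the TV bound: the factor $2$ appearing in $\bar D_n \le 2D_n$ restricts the range of $\gep$ on which the single-doubling argument applies, and I would have to check carefully that this range is non-empty (which is enough, since the proposition asks for $\limsup_n$ at fixed $\gep$). The factor $2$ on the right-hand side of the statement itself is a direct manifestation of the time-doubling $t \mapsto 2t$ in the application of submultiplicativity, and the tensorization identity $1 - D^s_n = \sigma^n$ ensures it transfers losslessly from the single chain to the product chain in the separation setting.
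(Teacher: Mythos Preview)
Your argument proves the wrong inequality. Since $\Tm(a)$ is decreasing in $a$, for $\gep<1/2$ one already has $\Tm(1-\gep)\le\Tm(\gep)$, so the bound $\Tm(1-\gep)\le 2\Tm(\gep)$ that you derive is vacuous. (The displayed ratio in the proposition is a misprint; what the paper actually proves, and what pre-cutoff requires, is $\limsup_n \Tm(\gep)/\Tm(1-\gep)\le 2$.) If you try to flip your argument by taking $t_0=\Tm(1-\gep)$, you only know $D^s_n(t_0)\le 1-\gep$, and submultiplicativity then gives $D^s_n(2t_0)\le(1-\gep)^2$, which is \emph{not} below $\gep$ when $\gep$ is small. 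The same obstruction hits $\bar D_n$. Submultiplicativity at the product-chain level is useless at the start of the mixing window, where the distance is still close to~$1$.

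The paper circumvents this by descending to the single chain. For separation, the tensorization $D^s_n(t)=1-(1-d^s_n(t))^n$ shows that the entire window $[\Tm(1-\gep),\Tm(\gep)]$ corresponds to $d^s_n(t)$ of order $1/n$; one sandwiches $\Tm(1-\gep)$ and $\Tm(\gep)$ between $t^n_s(n^{-2/3})$ and $t^n_s(n^{-4/3})$ and then applies the single-chain submultiplicativity $d^s_n(2t)\le d^s_n(t)^2$, which squares a \emph{small} number and is therefore effective, to get $t^n_s(n^{-4/3})\le 2\,t^n_s(n^{-2/3})$. For total variation the same scheme needs a distance that both tensorizes well and enjoys a contraction under time-doubling at the single-chain level; $\bar d$ fails the first requirement and the TV--separation comparison only yields the constant $4$. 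The paper therefore passes to the Hellinger distance, proving the new inequality $d^H(2t)\le 7\,(d^H(t))^{5/4}$ and running the sandwich with $d^H_n$ of order $n^{-1/2}$. Your proposal does not contain this ingredient, and without it the optimal constant $2$ for total variation cannot be reached.
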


\begin{remark}
 In the first draft of this paper, the optimal bound of $2$ for the mixing time ratio was proved to hold only for the 
 separation distance. The idea of using the Hellinger distance  to obtain an optimal bound  also 
 for the total-variation distance (developped in Section \ref{PTV}) is due to Yuval Peres.
\end{remark}

\subsection{Proof of Proposition \ref{preco} for the separation distance}

The separation distance to equilibrium for $Y^n$ is given by

\begin{equation}\label{dd}
D^s_n(t):= 1- \min_{\bx, \by \in \gO_n} \frac{Q^n_t(\bx,\by)}{\mu_n(\by)}= 1- (1- d^s_n(t))^n.
\end{equation}
Hence for $\gep$ fixed and $n$ sufficiently large we have

\begin{equation}\label{technios}
t^n_s(n^{-2/3})\le T^n_s( 1-\gep )\le T^n_s( \gep )\le t^n_s(n^{-4/3})\le 2 t^n_s(n^{-2/3}),
\end{equation}
where the last inequality is due to the submultiplicativity property for the separation distance 

\begin{equation}\label{submul}
d_s(a+b)\le d_s(a)d_s(b).
\end{equation}
Hence the result.

\qed

For the total-variation distance,  can obtain \eqref{crrr} with $4$ instead of $2$ on the r.h.s.
simply by using the following comparison 
between the total variation distance and separation distance
for reversible Markov chains initially proved in \cite{cf:AD2} (see also \cite[Lemma 6.13 and Lemma 19.3]{cfLPW})
$$d_n(t) \le d_n^s(t)\le 4 d_n(t/2)).$$

\subsection{Proof of Proposition \ref{preco} for the total-variation distance}\label{PTV}

For an optimal result, we need to use the \textit{Hellinger distance} which has the property of behaving nicely for
product. This section starts with the introduction of notation and recalling some classical inequalities.

\medskip

Given $\mu$ and $\nu$ two probability measures on a common finite state space $\gO$, tj
\begin{equation}
 d^H(\mu,\nu):= \sqrt{\sum_{x\in \gO}\left(\sqrt{\nu(x)}-\sqrt{\mu(y)}\right)^2}.
\end{equation}
We have the following comparisons with the total-variation distance (see for instance \cite[(20.22) and (20.29)]{cf:LPW})
\begin{equation}\label{comparison}
 \| \mu-\nu \|_{TV} \le d^H(\mu,\nu)\le \sqrt{2 \| \mu-\nu \|_{TV}}
\end{equation}
We set 
\begin{equation}\label{def:dhn}
 d^H_n(t):= \sup_{x\in \gO} d^H(P^{n}_t(x,\cdot),\pi_n),
\end{equation}
and let $D^H_n(t)$ denote the counterpart of $d^H_n$ for the chain $Y_n$.
Similarly to \eqref{dd}, it is easy to remark (see also \cite[Exercice 20.5]{cf:LPW}) that 
\begin{equation}
1- \frac{1}{2}\left(D^H_n(t)\right)^2= \left (1- \frac 1 2 (d^H_n(t))^2\right)^n.
\end{equation}
Hence we know that $D^H_n(t)$ is close to $\sqrt{2}$ resp. $0$ (and hence by \eqref{comparison}
that $D_n(t)$ is close to $1$ resp. $0$) if and only if $\sqrt{n}d^H_n(t)$ is close to infinity resp. $0$.
What we need to conclude is that there is a time window $[t_n, 2t_n]$ for which the 
Hellinger distance drops from $n^{-1/2+\delta}$ to $n^{-1/2-\delta}$.
We achieve this by proving the following property of the Hellinger distance for reversible Markov chains
\begin{lemma}\label{hellinger}
For any reversible irreducible Markov chain and any $t\ge 0$,
\begin{equation}
d^H(2t)\le 7(d^H(t))^{5/4}.
\end{equation}
\end{lemma}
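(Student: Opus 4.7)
The plan is to leverage reversibility through the key identity
\begin{equation*}
\sqrt{h_{2t}(x,y)} \;=\; \|(1+f_x)(1+f_y)\|_{L^2(\pi)},
\qquad f_w(z) := \sqrt{h_t(w,z)} - 1,
\end{equation*}
which follows from $h_{2t}(x,y) = \sum_z \pi(z)\, h_t(x,z)\, h_t(y,z)$ (combining the semigroup property with reversibility in the form $P_t(z,y)=\pi(y)h_t(y,z)$) together with $h_t(w,z)=(1+f_w(z))^2$. Setting $\gep := d^H(t)$, one has $\|f_w\|_{L^2(\pi)} = d^H(t,w) \le \gep$ for every state $w$, and the normalization $\sum_z \pi(z) h_t(w,z) = 1$ forces the cancellation $\sum_z \pi(z)\, f_w(z) = -\tfrac12 (d^H(t,w))^2$.

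The main step is to decompose $(1+f_x)(1+f_y) = (1+f_x+f_y) + f_xf_y$ and control the two summands in $L^2(\pi)$. Expanding and using the cancellation above, a direct computation yields
\begin{equation*}
\|1+f_x+f_y\|_{L^2(\pi)}^2 \;=\; 1 + 2\sum_z \pi(z)\,f_x(z) f_y(z),
\end{equation*}
and Cauchy--Schwarz bounds $|\sum_z \pi(z) f_x f_y|\le \gep^2$. The elementary inequality $|\sqrt{1+2s}-1|\le 2|s|$ (valid whenever $1+2s\ge 0$, which here is automatic since the left-hand side above is a squared norm) then gives $\big|\|1+f_x+f_y\|_{L^2(\pi)} - 1\big|\le 2\gep^2$. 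Combined with the triangle inequality in $L^2(\pi)$ this produces
\begin{equation*}
\big|\sqrt{h_{2t}(x,y)}-1\big| \;\le\; 2\gep^2 + \|f_xf_y\|_{L^2(\pi)},
\end{equation*}
so that $(\sqrt{h_{2t}(x,y)}-1)^2 \le 8\gep^4 + 2\|f_xf_y\|_{L^2(\pi)}^2$.

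To conclude I integrate against $\pi(y)$. Fubini together with the second use of reversibility (which gives $f_y(z) = f_z(y)$ because $h_t(y,z)=h_t(z,y)$) produces
\begin{equation*}
\sum_y \pi(y)\,\|f_xf_y\|_{L^2(\pi)}^2 \;=\; \sum_z \pi(z)\, f_x(z)^2 \sum_y \pi(y)\, f_y(z)^2 \;=\; \sum_z \pi(z)\, f_x(z)^2\,(d^H(t,z))^2 \;\le\; \gep^4,
\end{equation*}
whence $(d^H(2t,x))^2 \le 10\gep^4$ uniformly in $x$. This already yields the stronger estimate $d^H(2t)\le \sqrt{10}\,(d^H(t))^2$; since $d^H(t)\le \sqrt 2$ one has $(d^H(t))^{3/4}\le 2^{3/8}$, and $\sqrt{10}\cdot 2^{3/8}<7$, so $\sqrt{10}(d^H(t))^2 \le 7(d^H(t))^{5/4}$, giving the claim.

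The main obstacle will be spotting the exact cancellation $\|1+f_x+f_y\|_{L^2(\pi)}^2 = 1 + 2\sum_z\pi(z) f_x(z) f_y(z)$: without it, the linear pieces would contribute at order $\gep$ rather than $\gep^2$ and the argument would only produce the trivial $d^H(2t)\le C\,d^H(t)$. A secondary subtlety is the second use of reversibility in the integration step, through which the inner sum $\sum_y \pi(y) f_y(z)^2$ is recognized as $(d^H(t,z))^2$ and hence bounded by $\gep^2$.
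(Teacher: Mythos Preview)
Your argument is correct and in fact proves the stronger inequality $d^H(2t)\le\sqrt{10}\,(d^H(t))^2$, from which the stated bound follows. The paper takes a genuinely different route: it fixes an extremal $x$, writes $g:=h_t(x,\cdot)$ and $g':=h_{2t}(x,\cdot)$, and decomposes $g-1=h_1+h_2$ according to the level set $\{|g-1|\ge u^{1/2}\}$ (with $u=d^H(t)$), so that $\|h_1\|_\infty\le u^{1/2}$ while a short computation gives $\|h_2\|_{l_1(\pi)}\le 10u^{3/2}$; it then uses that $P_t$ contracts $l_1$ by a factor $\bar d(t)\le 2u$ and does not expand $l_\infty$, and splits the integral defining $(d^H(2t))^2$ over $\{|g'-1|\lessgtr 2u^{1/2}\}$. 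This yields only the exponent $5/4$. Your approach is more intrinsic to the Hellinger distance: the reversibility identity $h_{2t}(x,y)=\langle h_t(x,\cdot),h_t(y,\cdot)\rangle_\pi$ together with the exact cancellation $\|1+f_x+f_y\|_{L^2(\pi)}^2=1+2\langle f_x,f_y\rangle_\pi$ lets you avoid any level-set truncation and reach the quadratic contraction directly. Both proofs use reversibility essentially---the paper through the $l_1$-operator-norm characterization of $\bar d(t)$, yours through the symmetry $h_t(y,z)=h_t(z,y)$ in the final integration step---but your argument is shorter and gives a sharper conclusion.
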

With this results at hand, it is easy to prove, that similarly to \eqref{technios}, for 
$$t_n:=\inf\{ t \ | \  d^H_n(t)\le n^{-3/7} \}.$$
one has for any $\gep \in (0,1/2)$, for all $n$ sufficiently large 

\begin{equation}
t_n\le  \Tm(1-\gep) \le \Tm(\gep)\le 2 t_n.
\end{equation}

%

\begin{proof}[Proof of Lemma \ref{hellinger}]

We  introduce now $\bar d(t)$ defined as 
\begin{equation}
\bar d(t):=\max_{x,y\in \gO^2} \| P_t(x,\cdot)-P_t(y,\cdot) \|_{TV}.
\end{equation}
Note that, as the chain is assumed to be reversible $\bar d (t)$ also 
correspond to the operator norm for $P_t$ acting on integrable functions with mean $0$, or more precisely
\begin{equation}\label{normop}
 \bar d(t) =\max_{\{f \in l_1(\pi) \ |  \ \pi(f)=0\}} \frac{\| P_t f \|_{l_1(\pi)} }{\| f\|_{l_1(\pi)}},
\end{equation}
where 
$$P_t f(x):=\sum_{y\in \gO} P_t(x,y)f(y).$$

The function $\bar d(t)$ compares well with $d(t)$ and is submultiplicative 
(see for instance \cite[Chapter 4]{cf:LPW})
\begin{equation}\label{propbard}\begin{split}
 d(t)\le \bar d(t)\le 2 d(t) \\
 \bar d(t+s)\le \bar d(t)\bar d(s).
\end{split}\end{equation}
Combining \eqref{comparison} and \eqref{propbard}, we have for every $t$
\begin{equation}\label{comparison2}
 \bar d(t)/2 \le d^H(t)\le \sqrt{2 \bar d(t)}.
\end{equation}

\medskip

Let us try now to prove the result from
 \eqref{comparison2} (inequality on the left) and \eqref{propbard} in a naive way.
 We have
\begin{equation}\label{cromox}
\bar d(2t)\le (\bar d(t))^2 \le 4 (d^H(t))^2,
\end{equation}
and hence using \eqref{comparison2} again (inequality on the right) we obtain
\begin{equation}
 d^H(2t)\le  \sqrt{8} d^H(t),
\end{equation}
which is not satisfying.

\medskip

To find a way out, we have to prove that if the inequality on the left in
\eqref{comparison2} is sharp for $t$, the inequality on the right cannot be sharp for $2t$.

\medskip

We set $u:=d^H(t)$ (note that we can assume $u\le 1$ as the result is trivial for $u\ge 1$)
Let $x$ an  element of $\gO$ for which  $d^H(2t)= d^H(P_t(x,  \ \cdot \ ), \pi)$.
Let $g$ denote the density of $P_{t}(x,\cdot)$ with respect to $\pi$
and $g'$ denote the density of $P_{2t}(x,\cdot)$ with respect to $\pi$.

\medskip

We have from our definitions
\begin{equation}\label{groki}\begin{split}
 \sqrt{\int \left( \sqrt {g'(y)}-1 \right)^2 \pi( \dd y)}&=(d_H(2t))^2,\\
 \sqrt{\int \left(\sqrt {g(y)}-1 \right)^2 \pi( \dd y)}&\le u^2.
\end{split}\end{equation}

Our first step is the contribution to the total variation 
distance $\| P_t(x,  \ \cdot \ ), \pi \|$  of the set $\{y  \ | \ |g(y)-1|\ge u^{1/2}\}$ is much smaller than $u$.

\begin{lemma}
We have for all $u\le 1$
\begin{equation}
\int |g(y)-1| \ind_{\{|g(y)-1|\ge u^{1/2}\}}\dd \pi(\dd y) \le 10 u^{3/2}.
\end{equation}
\end{lemma}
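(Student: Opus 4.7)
The plan is to exploit the algebraic factorization $g-1=(\sqrt{g}-1)(\sqrt{g}+1)$. Setting $v:=\sqrt{g}-1$, so that $|g-1|=|v|(v+2)$, the hypothesis \eqref{groki} reads $\int v^2\,\pi(\dd y)\le u^2$. The heuristic is that the strong $L^2$ control on $v$ should translate into an $L^1$ bound on $g-1$ with an extra factor $u^{1/2}$, at least on the region $A:=\{|g-1|\ge u^{1/2}\}$ where we are not integrating very small values.

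The first step is to bound $\pi(A)$. When $|v|\le 1$ one has $v+2\le 3$, hence $|g-1|\le 3|v|$, so the condition $|g-1|\ge u^{1/2}$ forces $|v|\ge u^{1/2}/3$; when $|v|>1$ this is automatic since $u\le 1$. Therefore $A\subset \{|v|\ge u^{1/2}/3\}$, and Chebyshev yields
$$\pi(A)\le \frac{9}{u}\int v^2\,\pi(\dd y)\le 9u.$$

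The second step is to split $A=A_1\cup A_2$ with $A_1:=A\cap\{g\le 4\}$ and $A_2:=\{g>4\}$, noting $A_2\subset A$ (since $g>4$ gives $g-1>3>u^{1/2}$). On $A_1$ the inequality $|g-1|\le 3|v|$ still holds, so Cauchy--Schwarz together with the estimate on $\pi(A_1)$ gives
$$\int |g-1|\,\ind_{A_1}\,\pi(\dd y)\le 3\Big(\int v^2\,\pi(\dd y)\Big)^{1/2}\sqrt{\pi(A_1)}\le 3u\cdot \sqrt{9u}=9u^{3/2}.$$
On $A_2$ we have $v>1$, hence $v+2\le 3v$, and therefore $|g-1|=v(v+2)\le 3v^2$, which integrates pointwise to
$$\int |g-1|\,\ind_{A_2}\,\pi(\dd y)\le 3\int v^2\,\pi(\dd y)\le 3u^2\le 3u^{3/2}.$$

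Summing these two contributions already yields the stated inequality with constant $12$ in place of $10$; the sharper value $10$ requires only a slight optimization of the splitting threshold (e.g.\ splitting at some $g=c$ slightly larger than $1$ and exploiting that $v+2$ can then be bounded by something closer to $\sqrt{2}+1$ on $A_1$). I expect no serious obstacle: the conceptual work is the reduction $\pi(A)=O(u)$, which is what gains the extra $u^{1/2}$ over the naive bound $\int|g-1|\,\pi(\dd y)\le 2u$ coming from the Hellinger--total variation comparison, and the remaining step is routine bookkeeping to pin down the specific constant.
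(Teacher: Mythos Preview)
Your argument is correct and complete up to the constant: the Chebyshev bound $\pi(A)\le 9u$ followed by Cauchy--Schwarz on $A_1$ and the pointwise bound $|g-1|\le 3v^2$ on $A_2$ are all valid, and together they yield $12\,u^{3/2}$. One small overstatement: optimising the splitting threshold in your scheme does \emph{not} reach $10$. If you cut at $g=c$ the $A_1$ contribution becomes $(\sqrt c+1)^2u^{3/2}$ and the $A_2$ contribution $\frac{\sqrt c+1}{\sqrt c-1}u^{3/2}$; minimising the sum in $s=\sqrt c$ gives $s=(1+\sqrt 5)/2$ and a constant of roughly $11.1$. This is of no consequence, since the constant only feeds into the $7$ in Lemma~\ref{hellinger} and any finite value suffices for Proposition~\ref{preco}.

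The paper takes a different and somewhat shorter route: it proves the \emph{pointwise} inequality
\[
|g-1|\,\ind_{\{|g-1|\ge u^{1/2}\}}\ \le\ (3+2\sqrt 2)\,u^{-1/2}\,(\sqrt g-1)^2,
\]
by treating the two ranges $g\ge 2$ and $g\in(0,2)$ separately (in the first range $\frac{|g-1|}{(\sqrt g-1)^2}=\frac{\sqrt g+1}{\sqrt g-1}\le(\sqrt 2+1)^2$, in the second $|g-1|\le u^{-1/2}|g-1|^2\le u^{-1/2}(\sqrt 2+1)^2(\sqrt g-1)^2$), and then integrates against $\pi$ using $\int(\sqrt g-1)^2\,d\pi\le u^2$. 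This gives the constant $3+2\sqrt 2\approx 5.83$, which the paper then rounds up to $10$. So the paper avoids Cauchy--Schwarz and the Chebyshev estimate on $\pi(A)$ entirely, at the price of a slightly more delicate case analysis; your route is arguably more conceptual (identifying $\pi(A)=O(u)$ as the mechanism for the extra $u^{1/2}$), while the paper's is tighter and more direct.
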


\begin{proof}
 We have to show that
 \begin{multline} \label{mush}
 \int |g-1| \ind_{\{|g(y)-1|\ge u^{1/2} \}}\dd \pi(\dd y)\\
 \le  10 \int \left(\sqrt{ g(y)}-1\right)^2 u^{-1/2} \ind_{\{|g(y)-1|\ge u^{1/2}\}}\dd \pi(\dd y),
 \end{multline}
 and we conclude by using \eqref{groki}.
 The inequality \eqref{mush} is obtained by noticing that when $g\ge 2$ we have 
 \begin{equation}
 |g-1|\le (3-2\sqrt{2}) |\sqrt g-1|^2,
 \end{equation}
 while when $g\in (0,2)$, $|g-1|\ge u^{1/2}$,
 we have 
 \begin{equation}
 |g-1|\le u^{-1/2} |g-1|^2\le \frac {u^{-1/2}} {(\sqrt 2-1)^2}|\sqrt g-1|^2.
 \end{equation}
\end{proof}
Now
we can decompose $g-1$ into a sum of two function $h_1$ and $h_2$: one which has a small $l_\infty$ norm, 
and one which has a small $l_1$ norm.

\begin{equation}\begin{split}
 h_1(y):= (g-1)(y) \ind_{\{|g(y)-1|< u^{1/2}\}},\\
 h_2(y):= (g-1)(y) \ind_{\{|g(y)-1|\ge u^{1/2}\}}.
\end{split}\end{equation}

We have
\begin{equation}\begin{split}
 \|h_1\|_{l_\infty}&\le  u^{1/2},\\
 \|h_2\|_{l_1(\pi)}&\le  10 u^{3/2}.
\end{split}\end{equation}
Setting $h'_i:=P_{t}h_i$ one has

\begin{equation}
 g'-1=h'_1+h'_2.
\end{equation}
From \eqref{normop} one has (using \eqref{comparison2} to bound $\bar d(t)$)
\begin{equation}\label{lesborns}
\begin{split}
 \|h'_2\|_{l_1(\pi)}&\le \bar d(t)\|h_2\|_{l_1(\pi)}\le 20 u^{5/2},\\
  \|h'_1\|_{l_\infty}&\le \|h_1\|_{l_\infty}\le u^{1/2}.
  \end{split}
 \end{equation}
Moreover 
\begin{equation}\label{lesborns2}
 \|g'-1\|_{l_1(\pi)}\le d(2t)\le  \bar d(t)^2\le 4u^2.
 \end{equation}
We are now ready to bound $(d^H(2t))^2$.
 We split it into two parts. The first one is bounded thanks to \eqref{lesborns2}
\begin{equation}
 \int \left(\sqrt{g'(y)}-1\right)^2  \ind_{\{|g'(y)-1|\le 2u^{1/2}\}}\pi(\dd y) 
 \le 2u^{1/2} \int |g'(y)-1| \pi(\dd y)\le 8 u^{5/2}.
 \end{equation}
 For the second part, note that as $(\sqrt{g'}-1)^2\le |g'-1|$ we have
\begin{multline}
 \int \left(\sqrt{g'(y)}-1\right)^2 \ind_{|g'(y)-1|\ge 2u^{1/2}}\pi(\dd y)\\
 \le 
  \int |g'(y)-1| \ind_{|h'_2(y)|\ge u^{1/2}}\pi(\dd y)\\
    \le \int 2|h'_2(y)| \ind_{|h'_2(y)|\ge  u^{1/2}}\pi(\dd y)\le 40 u^{5/2}.
\end{multline}
where the last inequality comes from \eqref{lesborns}, and the one before from the fact that 
$$|g'-1|\le |h'_1+h'_2|\le |h'_2|+u^{1/2}.$$
This allows us to conclude.

\end{proof}

\section{An example without cutoff}

\subsection{Construction}

Let us now define a sequence $X^n$ such that $Y^n$ displays no cutoff. The idea build on the counter example 
of Aldous displayed on \cite[pp 256]{cf:LPW}.
The state-space of $X^n$ is the vertex set $V_n$ of a graph $G_n$ with $2n+1$ edges and  $2n+1$  vertices defined as follows:
\begin{itemize}
\item There is a segment of $2n$ edges linking $2n+1$ vertices. We call $A$ and $C$ its ends.
\item There is an extra edge linking the midle point of the segment (which we call $B$) to $C$.
\end{itemize}
The transition rates are positive on the edges of $G_n$ and are specified in the caption of Figure \ref{graf} .

\begin{figure}[hlt]
 \epsfxsize =14.0 cm
 \begin{center}
  \psfrag{en}{$\gep_n$}
  \psfrag{een}{$\gep'_n$}
    \psfrag{A}{$A$}
        \psfrag{B}{$B$}
           \psfrag{C}{$C$}
       \psfrag{n}{$n$}
           \psfrag{1}{$1$}
        \psfrag{1/n}{$1/n$}
            \psfrag{1-1/n}{$1-1/n$}
                     \epsfbox{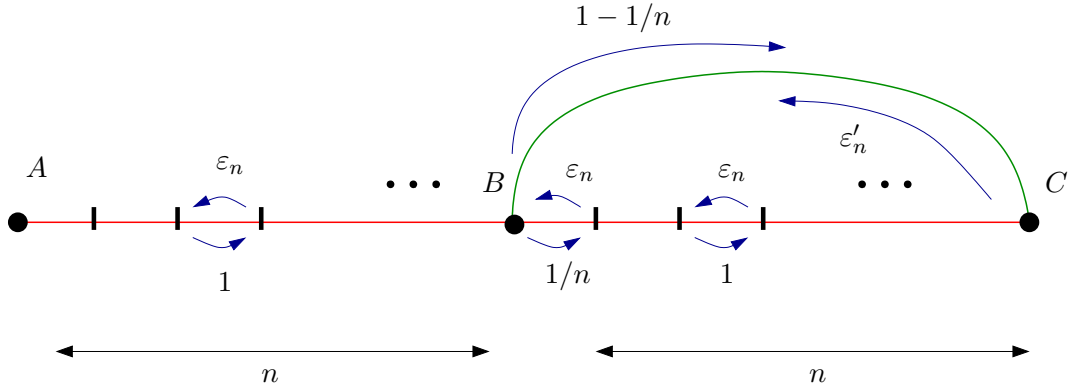}
 \end{center}
 \caption{\label{graf}  
The graph $G_n$ together with the transition rates of $X^n$: the two segments that are represented in red are of length $n$.
The jump rate are represented in blue above the arrows. In the direction from $A$ to $C$  
the jump rate is always one except at point $B$
where the jump rate to $C$ (along the green edge) is equal to $1-1/n$ while the probability to jump towards 
$C$ on the red path is $1/n$.
The jumps in the direction of $A$ along red edges are equal to $\gep_n:=2^{-n^2}$.
The jump rate from $C$ to $B$ along the green edge is equal to $$\gep'_n:=\frac{(n-1)2^{n^3}}{n^2}.$$}
 \end{figure}
With this definition it is not difficult to check  that $X^n$ is a reversible Markov chain. 
We have chosen $\gep_n$ to be exponentially small 
 but the result we are going to present whould remain valid for $\gep_n=1/2$ for all $n$ (or any other value smaller than $1$). Note that 
 the value of $\gep'_n$
is determined by that of $\gep_n$ in order to have reversibility.

\begin{proposition}\label{cutoff}
The construction above satisfies the following property
\begin{itemize}
\item[(i)] The sequence $X^n$ displays cutoff around time $n$, both in separation and total-variation distance.
\item[(ii)] The sequence does not $Y^n$ display cutoff as 
\begin{equation}
\Tm(a)=\begin{cases} 2n(1+o(1))  &\quad \text{ for }   a\in (1-e^{-1},1),\\
n(1+o(1))&\quad \text{ for }   a\in (0,1-e^{-1}).
\end{cases}
\end{equation}
\end{itemize}
(the notation means that for a fixed $a\ne (1-e^{-1})$, $\Tm(a)/n$ converges either to $1$ or $2$.)
The same holds for the separation distance.
\end{proposition}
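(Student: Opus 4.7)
The plan is to analyze carefully the trajectory of $X^n$ started from the worst-case initial state $A$. Since the backward rate $\gep_n = 2^{-n^2}$ is super-polynomially small, each red segment behaves essentially as a Poisson process of rate $1$ with a deterministic drift, so the classical CLT yields that the hitting time $\tau_B$ of the midpoint $B$ from $A$ satisfies $\tau_B = n + O_{\bP}(\sqrt n)$. Upon its first visit to $B$, the chain jumps to $C$ along the green edge (shortcut scenario) with probability $1 - 1/n + O(\gep_n)$ and to $v_{n+1}$ along the red path (long scenario) with probability $1/n + O(\gep_n)$. The value of $\gep'_n$ is calibrated by reversibility so that $\pi_n$ is concentrated at $C$, with $\pi_n(C) = 1 - O(2^{-n^2})$, and once the chain reaches $C$ it remains there on any $O(n)$ time-scale. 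In the long scenario, the chain drifts from $v_{n+1}$ through the second segment and reaches $C$ after an additional $n + O_{\bP}(\sqrt n)$ time units.

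For part (i), let $\rho_n(t) := \bP_A(X^n(t) = C)$. The above description yields
\[
\rho_n(t) = \begin{cases} o(1), & t \le n - \omega(\sqrt n), \\ 1 - 1/n + o(1/n), & t \in (n + \omega(\sqrt n),\, 2n - \omega(\sqrt n)), \\ 1 - o(1/n), & t \ge 2n + \omega(\sqrt n). \end{cases}
\]
Combined with $\pi_n(C) = 1 - o(1)$, this gives $d_n(t) = 1 - \rho_n(t) + o(1)$, so $d_n(n + \omega(\sqrt n)) \to 0$ while $d_n(n - \omega(\sqrt n)) \to 1$. Hence $\tm^n(\gep) = n(1+o(1))$ for every $\gep \in (0,1)$, proving cutoff at time $n$. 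The separation-distance version is identical: the minimum in $d^s_n(t) = 1 - \min_{x,y} P^n_t(x,y)/\pi_n(y)$ is attained at $(x,y) = (A,C)$, yielding $d^s_n(t) = 1 - \rho_n(t) + o(1)$.

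For part (ii), by independence of the coordinates,
\[
\bE_{(A,\ldots,A)}\bigl[\ind\{\forall i : Y^n_i(t) = C\}\bigr] = \rho_n(t)^n,
\]
while the same expectation under $\pi_n^{\otimes n}$ equals $\pi_n(C)^n = 1 - o(1)$; this provides the lower bound $D_n(t) \ge 1 - \rho_n(t)^n - o(1)$. A matching upper bound $D_n(t) \le 1 - (1 - d_n(t))^n$ from the product coupling, combined with $d_n(t) = 1 - \rho_n(t) + o(1)$, shows that $D_n(t) = 1 - \rho_n(t)^n + o(1)$ uniformly on the range of interest. Plugging in the three regimes of $\rho_n$, we obtain $D_n(t) \to 1 - e^{-1}$ uniformly on the plateau $t \in (n + \omega(\sqrt n),\, 2n - \omega(\sqrt n))$, $D_n(t) \to 1$ for $t \le n - \omega(\sqrt n)$, and $D_n(t) \to 0$ for $t \ge 2n + \omega(\sqrt n)$. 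Therefore $\Tm(a) = n(1+o(1))$ when $a > 1 - e^{-1}$ (the distance drops below $a$ already at $t \approx n$) and $\Tm(a) = 2n(1+o(1))$ when $a < 1 - e^{-1}$ (the distance stays at $1-e^{-1}$ throughout the plateau and drops below $a$ only after $t \approx 2n$). The separation-distance dichotomy follows from the exact product identity $D^s_n(t) = 1 - (1 - d^s_n(t))^n$ recorded in \eqref{dd}.

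The hardest step is to establish the $o(1/n)$ precision in the middle regime of $\rho_n$, since the whole phenomenon rests on the limit $(1 - 1/n + o(1/n))^n \to e^{-1}$ rather than $0$ or $1$. This requires (a) Gaussian-scale concentration for both hitting times, namely $\tau_B$ and the drift time from $v_{n+1}$ to $C$ along the second segment, and (b) the observation that multiple excursions through $B$ are irrelevant in the window $[n, 2n]$: since the expected return time from $C$ to $B$ is of order $1/\gep'_n$, which is super-polynomial in $n$, the short-versus-long decision is made definitively at the first visit to $B$, so the probabilistic branching does not repeat on the relevant time scale.
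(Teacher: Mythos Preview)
Your overall approach coincides with the paper's: reduce everything to the probability of having reached $C$ from the worst start $A$, analyze the green/red branching at $B$, and exploit the product structure for $Y^n$. The paper works with the hitting time $\tau$ of $C$ (via its Lemma~\ref{crooom}), you with $\rho_n(t)=\bP_A(X^n(t)=C)$; since leaving $C$ is negligible on any $O(n)$ time-scale these are equivalent, and your product-coupling bound $D_n(t)\le 1-(1-d_n(t))^n$ together with the $\{\text{all coordinates at }C\}$ lower bound is a clean substitute for applying Lemma~\ref{crooom} directly to $Y^n$.

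The one genuine gap is the separation step. You assert that $\min_{x,y}P^n_t(x,y)/\pi_n(y)$ is achieved at $(A,C)$, but this is not automatic: for $y\ne C$ one has $\pi_n(y)\le 2^{-n^2}$, yet $P^n_t(x,y)$ can itself be extremely small (take $x$ near $C$ on the red branch and $y=A$, where going from $x$ to $y$ requires many backward jumps). The paper spends a separate lemma on exactly this point, proving that for all $x,y\ne C$ and all $t\in(n/2,3n)$ one has $P^n_t(x,y)\ge\pi_n(y)$, by exhibiting an explicit trajectory from $x$ to $y$ along the red path of probability $e^{-O(n)}\gg 2^{-n^2}$; reversibility then reduces any pair involving $C$ to one of the form $(x,C)$. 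Without this argument your separation claim for $X^n$, and hence for $Y^n$ via the product identity~\eqref{dd}, is unsupported.

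A side remark: your conclusion $\Tm(a)\sim n$ for $a>1-e^{-1}$ and $\Tm(a)\sim 2n$ for $a<1-e^{-1}$ is the correct one and agrees with the paper's own computation of $\lim_{n\to\infty} D_n(ns)$; the two cases are interchanged in the displayed statement of the Proposition.
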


\begin{remark}
 The above Proposition shows that the inequality \eqref{crrr} concerning the ratio of the mixing time is optimal.
 \end{remark}

The main idea of the proof is that the total variation distance can be expressed 
in terms of the distribution of the time $\tau$ or $\cT$ needed to reach 
$A$ (for $X^n$) or ${\bf C}:=(C,C,\dots,C)$ (for $Y^n$) starting from $A$. 
In particular, there is cutoff if and only if this time is concentrated around its mean. 
For $X^n$ we show that $\tau$ concentrates around $n$, whereas for $Y^n$, $\cT$  will be about $2n$ 
if at least one of the coordinates $X^{n}_1$ decides to use the red path between $B$ and $C$ 
(which happens with a non-vanishing probability).

\subsection{Proof of Proposition \ref{cutoff}}

The equilibrium measure $\pi_n$ gives a weight $1-O(2^{-n^2})$ to the vertex $C$, 
and hence the equilibrium measure $\mu_n$ of $Y^n$, gives weight  $1-O(n 2^{-n^2})$ to ${\bf C}:=(C,C,\dots,C)$.
Because of this remark  we have
\begin{equation}\label{remark}
d_n(t)=1-\min_{x\in V_n} P_t(x,D)+o(1)  \quad  \text{ and } \quad  D_n(t)=1-\min_{\bx\in V^n_n} P_t(\bx,{\bf D})+o(1).
\end{equation}

For $x\in V_n$ or ${\bf x}\in V_n^n$, let $\bbP^{n,x}$ resp.\ $\bbQ^{n,\bx}$ be the law of $X^n(t)$ starting from $\bx$ resp. the law of $Y^n(t)$ and let 
$\tau$, resp. $\cT$ be the first hitting time of $D$ resp. ${\bf D}$.

\begin{lemma}\label{crooom}

We have 
\begin{equation}\begin{split}
d_n(t)&= \bbP^{n, A}(\tau>t)+o(1), \\
D_n(t)&= \bbQ^{n, {\bf A}} (\cT>t)+o(1),
\end{split}\end{equation}
meaning that 
\begin{equation}\begin{split}
\lim_{n\to \infty} \sup_{t\ge 0} | d_n(t)-\bbP^{n, A}(\tau>t)|&=0, \\
\lim_{n\to \infty} \sup_{t\ge 0}| D_n(t)-\bbQ^{n, \bf A} (\cT>t) |&=0.
\end{split}\end{equation}

\end{lemma}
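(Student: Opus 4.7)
The plan is to reduce both identities to two key ingredients: (a) an upper bound showing that once the chain reaches $C$ (resp.\ $\mathbf{C}$) it remains there for all subsequent times with high probability, and (b) a monotonicity argument identifying $A$ (resp.\ $\mathbf{A}$) as the worst starting point.

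For (a), I would exploit reversibility: for any $y\ne C$ and any $s\ge 0$,
\[
P^n_s(C, y) = \frac{\pi_n(y)}{\pi_n(C)}\, P^n_s(y, C) \le \frac{\pi_n(y)}{\pi_n(C)},
\]
so summing over $y\ne C$ and using $\pi_n(C) = 1-O(2^{-n^2})$ yields $P^n_s(C,C)\ge 1-O(2^{-n^2})$ uniformly in $s$. Since $\mu_n(\mathbf{C}) = \pi_n(C)^n = 1-O(n\,2^{-n^2})$, the very same reversibility argument applied to $Y^n$ gives $Q^n_s(\mathbf{C},\mathbf{C})\ge 1-o(1)$ uniformly in $s$.

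Armed with (a), I would apply the strong Markov property at $\tau$:
\[
P^n_t(x, C) = \bbE^{n,x}\bigl[\ind_{\{\tau\le t\}}\,P^n_{t-\tau}(C, C)\bigr] = \bbP^{n,x}(\tau\le t)\,(1-o(1)),
\]
so $P^n_t(x, C) = \bbP^{n,x}(\tau\le t) + o(1)$ uniformly in $x$ and $t$. For (b), I would argue by a monotone coupling along the red segment: decomposing the walk starting from any $x$ on the $A$--$B$ path through its hitting time of $B$, and noting that from any vertex closer to $C$ than $A$ on either path the hitting time of $C$ is trivially stochastically smaller, one obtains that $\tau_A$ stochastically dominates $\tau_x$ for every $x\in V_n$. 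Combining this with the previous step and \eqref{remark} yields $d_n(t) = \bbP^{n,A}(\tau > t) + o(1)$ uniformly in $t$. The product identity follows by the same template: $\cT$ is the first time all $n$ coordinates are simultaneously at $C$, but by the uniform bound $Q^n_s(\mathbf{C},\mathbf{C})\ge 1-o(1)$ combined with a union bound, the first hitting time of $\mathbf{C}$ differs from the maximum of the individual coordinate hitting times by $o(1)$ in probability, and the worst starting configuration is again $\mathbf{A}$ by monotonicity applied coordinatewise.

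The main obstacle I anticipate is the monotone coupling in (b), which requires some care because $G_n$ has a loop (the red and green paths both join $B$ to $C$) so two walks starting at different points may genuinely cross each other; however, the overwhelming drift toward $C$ on the red segments (forward rate $1$ versus backward rate $\varepsilon_n = 2^{-n^2}$) makes the relevant stochastic domination robust, and can be obtained by decomposing $\tau$ into a ``red segment $A\to B$'' portion and the hitting time of $C$ from $B$.
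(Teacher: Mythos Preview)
Your proposal is correct and, for step (a), takes a cleaner route than the paper. The paper splits into $t<3n$ and $t\ge 3n$: for small $t$ it conditions on the high-probability event of no backtrack to identify $P^n_t(x,C)$ with $\bbP^{n,x}(\tau\le t)$, and for $t\ge 3n$ it checks separately that both quantities are $o(1)$. Your reversibility identity $P^n_s(C,y)\le \pi_n(y)/\pi_n(C)$ gives $P^n_s(C,C)\ge 1-O(2^{-n^2})$ \emph{uniformly in $s$}, so the strong-Markov step yields the approximation for all $t$ at once---a genuine simplification. For step (b) the two arguments coincide on the $A$--$B$ segment (strong Markov at the first visit to $x$), but for $x$ on the interior of the red $B$--$C$ branch the paper does not claim exact stochastic domination: it conditions on no backtrack and compares $\tau_A\ge S_{n+1}$ against $\tau_x\le S_{n}$ (sums of i.i.d.\ standard exponentials), which gives the comparison up to an exponentially small error. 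Your ``trivially stochastically smaller'' is slightly overstated here---a walk from such $x$ can in principle backtrack past $A$---but, as you already anticipate, the drift makes this negligible and the paper's concrete comparison handles it. Finally, for the product chain your detour through ``$\cT$ versus $\max_i\tau_i$'' is unnecessary: your own strong-Markov step applies verbatim to $Y^n$ using $Q^n_s(\mathbf C,\mathbf C)\ge 1-o(1)$, and the worst starting point follows coordinatewise from $Q^n_t(\mathbf x,\mathbf C)=\prod_i P^n_t(x_i,C)$ together with the single-chain comparison, the errors being exponentially small and hence surviving the $n$-fold product.
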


\begin{proof}
We provide the proof for $d_n(t)$ as the other is identical.
First let us prove the result for $t<3n$, and we will check later that for  $t>3n$ both $d_n(t)$ and $\bbP^n(\tau>t)$ are $o(1)$.
Now the probability that a jump in the direction $A$ (a backtrack) 
occurs before time $3n$ is exponentiallty small in $n$ and thus from \eqref{remark} we have

\begin{equation}
P_t(x,D)=\bbP^{n,x}(\tau\le t)+o(1) 
\end{equation}
Hence from \eqref{remark}, it is sufficient to check that the minimum of $\bbP^{n,x}(\tau \le t)$ is reached for $A$ (up to some $o(1)$ correction).

\medskip

From an obvious coupling , we see that $A$ is the point of the segment $AB$ which makes $\tau$ the largest. 
It remains to check that starting from one of the $n-1$ inside points the red segment  $B$ and $C$ cannot make $\tau$ larger:
by conditioning to the event that $X^n$ does not backtrack before $t$ (which is an event of almost full probability)
we see that $\tau$ starting from $A$ is bounded from below by a sum of $n+1$ IID standard exponentials whereas 
in the $BC$ branches it is bounded from above by the sum of 
$n$ IID standard exponentials.

\medskip

Finally, for $t=3n$, as conditioned on no backtrack, $\tau$ starting from $A$ is a bounded from above by a sum of $2n$ 
IID standard exponentials, both $\bbP^n(\tau>3n)$ and 
 $d_n(t)$ are $o(1)$ (and the fact both functions are decreasing allows to conclude for larger values of $t$).

\end{proof}

From Lemma \eqref{crooom} one has
\begin{equation}
D_n(t)=1- \left[\bbP^{n,A}(\tau\le t)\right]^n+o(1).
\end{equation}
Hence $D_n(t)$ is in a neighborhood of $1$  resp. $0$ if and only  if  $n \bP^{n,A}(\tau>t)$ is in a neighborhood of infinity resp. $0$.

\medskip

Concerning $X^n$, one can remark that conditioning to the event that $X^n$ does not backtrack and uses a short branch to reach $D$, $\tau$ is a sum of $3n+1$ IID standard exponentials. Hence as the event to which we are conditioning has a probability tending to one, we have

\begin{equation}
\lim_{n\to \infty} d_n( ns )= \begin{cases} 1  \quad &\text{ if } s<1, \\
  0 \quad &\text{ if } s>1. \end{cases}
\end{equation}
and $X^n$ exhibits cutoff. 
However, the slow branch plays a crucial role for the product chain as the probability to hit $D$ from the longer branch asymptotically behaves like $n^{-1}$.
As a consequence we have 

\begin{lemma}

\begin{equation}
\lim_{n\to \infty} n \bbP^{n,A}(\tau > ns)= \begin{cases} \infty   \quad &\text{ if } s<1, \\
  1 \quad &\text{ if } s\in (1,2),  \\
  0 \quad &\text{ if } s>2.
  \end{cases}
\end{equation}

\end{lemma}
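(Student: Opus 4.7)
The plan is to decompose $\tau$ according to the path taken from $B$ to $C$ and to express it as a sum of IID standard exponentials, using the fact that backtracking events are exponentially rare on the timescales of interest. Explicitly, let $\cE_n$ be the event that $X^n$ starting from $A$ never jumps in the direction of $A$ before time $3n$. Since the backtrack rate along the $AB$ and $BC$ red segments equals $\gep_n=2^{-n^2}$, a union bound gives $\bbP^{n,A}(\cE_n^c)=O(n\gep_n)$, which is negligible compared to $1/n$. On $\cE_n$, the trajectory is determined by a single choice at $B$ between the green shortcut (probability $1-1/n$) and the red continuation (probability $1/n$), and in both cases $\tau$ is a deterministic number of IID standard exponential waiting times.

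More precisely, on $\cE_n$ one has $\tau = \sum_{i=1}^{n+1} E_i$ on the green event (one waiting time per jump: $n$ jumps from $A$ to $B$ and one green jump from $B$ to $C$), and $\tau = \sum_{i=1}^{2n} E_i$ on the red event (the $n$ red edges from $B$ to $C$), the $E_i$ being IID $\mathrm{Exp}(1)$.

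The three cases in the statement are then handled as follows. For $s<1$, already the hitting time of $B$ is at least $\sum_{i=1}^n E_i$, which by the (weak) law of large numbers exceeds $ns$ with probability tending to $1$; hence $n\bbP^{n,A}(\tau>ns)\to\infty$. For $s>1$, the contribution of the green event is bounded by $\bbP\bigl(\sum_{i=1}^{n+1}E_i>ns\bigr)$, which by Cramér's theorem decays exponentially in $n$, and is therefore $o(1/n)$. For the red event, using the independence of the path choice at $B$ and of the red waiting times,
\begin{equation*}
\bbP^{n,A}(\tau>ns,\text{red})=\tfrac{1}{n}\,\bbP\!\left(\sum_{i=1}^{2n}E_i>ns\right)+o(1/n).
\end{equation*}
For $1<s<2$ the law of large numbers gives $\bbP\bigl(\sum_{i=1}^{2n}E_i>ns\bigr)\to 1$, hence $n\bbP^{n,A}(\tau>ns)\to 1$; for $s>2$ it gives $\bbP\bigl(\sum_{i=1}^{2n}E_i>ns\bigr)\to 0$ (exponentially fast), hence $n\bbP^{n,A}(\tau>ns)\to 0$.

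The only delicate point is the second-order claim that $\bbP^{n,A}(\text{green},\cE_n)=(1-1/n)+o(1/n)$ and $\bbP^{n,A}(\text{red},\cE_n)=(1/n)+o(1/n)$, since a priori one has to check that on the complement of $\cE_n$ these probabilities do not contribute to order $1/n$. This is handled by observing that backtrack probabilities are of order $n\gep_n=o(1/n)$ and that, conditionally on no backtrack before $B$ is reached and on the choice of path at $B$, the waiting times along the chosen path are genuinely IID standard exponentials (the embedded jump chain is deterministic on $\cE_n$, and the holding time at each intermediate vertex has rate $1+\gep_n$, which contributes an extra multiplicative factor $(1+\gep_n)^{-2n}=1+o(1)$ that is absorbed into the asymptotics above). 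The rest is bookkeeping.
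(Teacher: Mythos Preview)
Your proposal is correct and follows essentially the same approach as the paper: condition on the (overwhelmingly likely) event of no backtracking, split according to the green/red choice at $B$, and identify $\tau$ with a sum of $n+1$ or $2n$ i.i.d.\ standard exponentials, then apply the law of large numbers and a large-deviation bound. The paper's own proof is a terse three-sentence version of your argument; your extra care about the holding-time rate $1+\gep_n$ versus $1$ and the $o(1/n)$ bookkeeping is a welcome elaboration rather than a different method.
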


\begin{proof}
Under $\bbP^{n,A}$ the probability that $X^n$ backtrack before time $3n$ is exponentially small in $n$ and thus can be neglected.
Conditioned on no backtracking, the probability to use the red segment $BC$ is equal to $n^{-1}$.
Now conditioned on using the red segment, $\tau$ is a sum of $2n$ IID standard exponentials 
whereas conditioned on using the green edge
$\tau$ is a sum of $n+1$ IID standard exponentials. Hence the result.
\end{proof}

This implies

\begin{equation}
\lim_{n\to \infty} D_n( ns )= \begin{cases} 1   \quad &\text{ if } s<1, \\
  1-e^{-1} \quad &\text{ if } s\in(1, 2),  \\
  0 \quad &\text{ if } s>2.
  \end{cases}
\end{equation}
and hence $Y^n$ exhibits no cutoff for total variation distance.

\medskip

Now let us show that cutoff also holds for the separation distance.
This amounts essentially to prove the following 

\begin{lemma}
For all $n$ sufficiently large,
for any $x,y\in V_n \setminus \{ C\}$, for all $n/2 \le t \le 3n$ one has 

\begin{equation}
P^n_t(x,y)\ge \pi_n (y)
\end{equation}

\end{lemma}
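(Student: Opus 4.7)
The plan is to produce an explicit lower bound on $P^n_t(x,y)$ via a single monotone forward trajectory from $x$ to $y$, and to compare it with $\pi_n(y)$, which turns out to be bounded by $2^{-n^2}$ for every $y\ne C$.

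Label the vertices of the segment $A=v_0,v_1,\dots,v_{2n}=C$, so that $B=v_n$. Iterating the detailed balance relations along the red edges gives $\pi_n(v_j)\le n\,\gep_n^{2n-j}\,\pi_n(C)$ for every $0\le j\le 2n$, and in particular $\pi_n(y)\le \gep_n=2^{-n^2}$ for every $y\in V_n\setminus\{C\}$. By reversibility, $\pi_n(x)P^n_t(x,y)=\pi_n(y)P^n_t(y,x)$, so the inequality $P^n_t(x,y)\ge \pi_n(y)$ is symmetric in $(x,y)$. Swapping $x$ and $y$ if needed, I may therefore assume $x=v_i$ and $y=v_j$ with $0\le i\le j\le 2n-1$, in which case the unique simple path from $x$ to $y$ in $G_n$ that avoids $C$ is the forward path $v_i\to v_{i+1}\to\cdots\to v_j$.

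The main step is a trajectory-based lower bound. I would estimate $P^n_t(v_i,v_j)$ from below by the probability that the chain performs exactly $L:=j-i$ jumps in $[0,t]$, each moving it to the next vertex of that forward path. The total out-rate at every relevant vertex is at most $1+\gep_n$, while the rate of each forward edge used is $1$, except for the single edge $B\to v_{n+1}$ whose rate is $1/n$. Computing the joint density of the $L$ jump times and integrating over $\{0<t_1<\cdots<t_L<t\}$ yields
\begin{equation*}
P^n_t(v_i,v_j)\;\ge\;\bigl(\tfrac{1}{n}\bigr)^{\ind\{i\le n<j\}}\,e^{-(1+\gep_n)t}\,\frac{t^L}{L!}.
\end{equation*}

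It remains to check that this quantity exceeds $2^{-n^2}$ uniformly for $0\le L\le 2n-1$ and $n/2\le t\le 3n$. Stirling's formula gives $t^L/L!\ge c(et/L)^L/\sqrt{L}$ for an absolute constant $c>0$, and in the worst case $L=2n-1$, $t=n/2$ this amounts to $(e/4)^{2n}/\sqrt{n}=2^{-O(n)}/\sqrt{n}$; combined with $e^{-(1+\gep_n)t}\ge e^{-4n}$ and the factor $1/n$, the lower bound becomes at least $2^{-Kn}/n^{3/2}$ for some explicit constant $K$. Since $Kn+\tfrac32\log_2 n\ll n^2$ for all $n$ large enough, this exceeds $\pi_n(y)\le 2^{-n^2}$, concluding the proof. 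The main technical point is the uniformity of the Stirling estimate in the extreme case $L\approx 2n$, $t\approx n/2$, where the Poisson mass $e^{-t}t^L/L!$ is far from its mode; but since we only need an exponential (in $n$) lower bound, this is routine.
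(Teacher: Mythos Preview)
Your proof is correct and follows essentially the same strategy as the paper: use reversibility to reduce to the case where $x$ precedes $y$ along the segment, lower bound $P^n_t(x,y)$ by the probability of the single forward trajectory that makes exactly $L=j-i$ jumps along the red path, and compare the resulting $e^{-O(n)}$ bound with $\pi_n(y)\le 2^{-n^2}$. The only difference is presentational: the paper argues qualitatively that the number-of-jumps probability is at least $e^{-C_1 n}$ and the path-following probability at least $1/(2n)$, whereas you compute the explicit density and push Stirling through the worst case $L\approx 2n$, $t\approx n/2$; both arrive at the same $e^{-O(n)}\gg 2^{-n^2}$ comparison.
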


\begin{proof}
From reversibility
\begin{equation}
 \frac{P^n_t(x,y)}
 {\pi_n (y)}\ge  \frac{P^n_t(y,x)}
 {\pi_n (x)}
\end{equation}
so that one can without loss of generality consider that $x$ is the point closer to $A$ on the red segment.
Let $d$ be the number of red edges between $x$ and $y$. 
Then $P^n_t(x,y)$ is bounded from below by the probability of the event: in the time interval $[0,t]$ 
the walk $X^n$ (starting from $x$) makes exactly 
$d$ jumps following the red path from $x$ to $y$.

\medskip

As  the jump rate for $X$ is always of order $1$ (except at point $C$), 
the probability of making exactly $d$ jumps in the time interval $[0,t]$ is larger than $e^{-C_1n}$ for some constant $C_1$.
The probability of not following the red path conditioning to the number of jump is at least $1/2n$ 
(a backtrack is exponentially unlikely, and if the path goes through $B$ the chance of choosing the right direction there is 
equivalent to $n^{-1}$). Hence there exists a constant $C_2$ such that when $n$ is sufficiently large 
\begin{equation}
\forall t \in (n/2, 3n), \quad \forall x,y\in  V_n \setminus \{ C\}, P^n_t(x,y)\ge e^{-C_2 n}.
\end{equation} 

As $\pi_n(y)\le 2^{-n^2}$ for all $y \ne C$, this is sufficient to conclude.
\end{proof}

From the previous Lemma (and the definition \eqref{septv} and reversibility),  one has for all $t \in (n/2,3n)$
\begin{equation}
d^s_n(t):= 1- \min_{x \in \gO_n} \frac{P^n_t(x,C)}{\pi_n(C)},
\end{equation}
which according to \eqref{remark} shows that the difference
between total-variation and separation distance for this chain is negligible.

{\bf Acknowledgement:}
The author is grateful to Perla Sousi and Yuval Peres to have make 
him now about the question of cutoff for product chains and for enlightening discussions.
In particular the author wishes to thank Yuval Peres for suggesting a proof of \eqref{crrr} 
for the total-variation distance.

\bibliographystyle{plain}

\end{document}